 \newtheorem{definition}{Definition}
 \newtheorem{theorem}[definition]{Theorem}
 \newtheorem{lemma}[definition]{Lemma}
 \newtheorem{corollary}[definition]{Corollary}
 \newtheorem{remark}[definition]{Remark}
 \newtheorem{example}[definition]{Example}
 \newtheorem{proposition}[definition]{Proposition}
\title{Transforms on operator monotone functions}
\author{Masato Kawasaki  and Masaru Nagisa}
\date{}
\begin{document}
\maketitle

\begin{abstract}
Let $f$ be an operator monotone function on $[0,\infty)$ with $f(t)\ge 0$ and $f(1)=1$.
If $f(t)$ is neither the constant function $1$ nor the identity function $t$, then
\[    h(t) = \frac{(t-a)(t-b)}{(f(t)-f(a))(f^\sharp (t)-f^\sharp (b))} 
     \qquad t\ge 0  \]
is also operator monotone on $[0,\infty)$, where $a,b\ge0$ and
\[  f^\sharp(t) = \frac{t}{f(t)} \qquad t\ge 0. \]
Moreover, we show some extensions of this statement.
\end{abstract}

\section{Introduction}
We call a real continuous function $f(t)$ on an interval $I$ operator monotone on $I$ (in short, $f\in \mathbb{P}(I)$ ),
if $A\le B$ implies $f(A)\le f(B)$ for any self-adjoint matrices $A, B$ with their spectrum containd in $I$.
In this paper, we consider only the case $I=[0,\infty)$ or $I=(0,\infty)$.
We denote $f\in \mathbb{P}_+(I)$ if $f\in \mathbb{P}(I)$ satisfies $f(t)\ge 0$ for any $t\in I$.

Let $\mathbb{H}_+$ be the upper half-plane of $\mathbb{C}$, that is,
\[  \mathbb{H}_+ = \{ z \in \mathbb{C} \mid {\rm Im }z > 0 \}
        = \{ z \in \mathbb{C} \mid |z|>0, \; 0<\arg z<\pi  \},  \]
where ${\rm Im} z$ (resp. $\arg z$) means the imaginary part (resp. the argument) of $z$.
When we choose an element $z\in \mathbb{H}_+$, we consider that its argument satisfies $0<\arg z<\pi$. 
As Loewner's theorem, it is known that $f$ is operator monotone on $I$ if and only if
$f$ has an analytic continuation to $\mathbb{H}_+$ that maps $\mathbb{H}_+$ into itself 
and also has an analytic continuation to the lower half-plane $\mathbb{H}_- (=-\mathbb{H}_+ )$,
obtained by the reflection across $I$
(see \cite{bhatia},\cite{hiai}).
For an operator monotone function $f(t)$ on $I$, we also denote by $f(z)$ its analytic continuation to $\mathbb{H}_+$.

D. Petz \cite{petz} proved that an operator monotone function 
$f:[0,\infty) \longrightarrow [0,\infty)$
satisfying the functional equation 
\[  f(t) = tf(t^{-1})  \qquad t\ge 0  \]
is related to a Morozova-Chentsov function 
which gives a monotone metric on the manifold of $n\times n$
density matrices.
In the work \cite{petzhase}, the concrete functions
\[  f_a(t) = a(1-a)\frac{(t-1)^2}{(t^a-1)(t^{1-a}-1)} \qquad (-1<a<2)  \]
appeared and their operator monotonicity was proved (see also \cite{caihan}).
V.E.S. Szabo introduced an interesting idea for checking their operator monotonicity in \cite{szabo}.
We use a similar idea as Szabo's in our argument.
M. Uchiyama \cite{uchiyama} proved the operator monotonicity of the following extended functions:
\[  \frac{(t-a)(t-b)}{(t^p-a^p)(t^{1-p}-b^{1-p})}    \]
for $0<p<1$ and $a,b>0$.
It is well known that the function $t^p$ $(0\le p\le 1)$ is operator monotone
as Loewner-Heinz inequality.
In this paper, we extend this statement to the following form:

\begin{theorem} \;  
Let $a$ and $b$ be non-negative real.
If $f\in \mathbb{P}_+[0,\infty)$ and both $f$ and $f^\sharp$ are not constant, 
then
\[  h(t) = \frac{(t-a)(t-b)}{(f(t)-f(a))(f^\sharp (t)-f^\sharp (b))}  \]
is operator monotone on $[0,\infty)$, where 
\[  f^\sharp(t) = \frac{t}{f(t)} \qquad t\ge 0 .  \]
\end{theorem}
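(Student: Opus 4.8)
The plan is to use Loewner's theorem: since $h$ is real on $(0,\infty)$ (the apparent singularities at $a$ and $b$ being removable), it is enough to show that the analytic continuation of $h$ maps $\mathbb{H}_+$ into $\overline{\mathbb{H}_+}$. I would work throughout with the integral representation
\[
   f(t)=\alpha+\beta t+\int_{(0,\infty)}\frac{t}{t+s}\,d\rho(s),\qquad \alpha=f(0)\ge 0,\ \ \beta\ge 0,\ \ \rho\ge 0 ,
\]
whose first use is this: for $z\in\mathbb{H}_+$ each of $\alpha$, $\beta z$ and $\frac{z}{z+s}$ $(s>0)$ has argument in $[0,\arg z]$, a convex cone because $\arg z<\pi$; hence $\arg f(z)\in[0,\arg z]$, $f(\mathbb{H}_+)\subseteq\mathbb{H}_+$ (as $f$ is non-constant), and $\arg f^\sharp(z)=\arg z-\arg f(z)\in[0,\arg z]$. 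Consequently $f^\sharp\in\mathbb{P}_+[0,\infty)$ (also well known), $(f^\sharp)^\sharp=f$, and $\arg f(z)+\arg f^\sharp(z)=\arg z$ with no ambiguity modulo $2\pi$.

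Next, write $h(z)=1/\bigl(\delta_a(z)\,\delta^\sharp_b(z)\bigr)$, where $\delta_a(z)=\frac{f(z)-f(a)}{z-a}$ and $\delta^\sharp_b(z)=\frac{f^\sharp(z)-f^\sharp(b)}{z-b}$ are divided differences; they are finite and non-zero on $\mathbb{H}_+$, since there $f(z)\ne f(a)$ and $f^\sharp(z)\ne f^\sharp(b)$ (as $\mathrm{Im}\,f(z)>0$ and $\mathrm{Im}\,f^\sharp(z)>0$), so $h$ is analytic there. From the representation, $\delta_a(z)=\beta+\int_{(0,\infty)}\frac{s}{(a+s)(z+s)}\,d\rho(s)$, a non-negative combination of $1$ and the numbers $\frac{1}{z+s}$ $(s>0)$, whose arguments lie in $[-\arg z,0]$; hence $\arg\delta_a(z)\in[-\arg z,0]$, i.e. $\arg\frac{1}{\delta_a(z)}\in[0,\arg z]$, and the same for $\delta^\sharp_b$. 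The whole argument hinges on the following estimate, which I would isolate as a lemma: \emph{for every non-constant $g\in\mathbb{P}_+[0,\infty)$, every $c\ge 0$ and every $z\in\mathbb{H}_+$,}
\[
   \arg\frac{z-c}{g(z)-g(c)}\ \le\ \arg\frac{z}{g(z)}\ =\ \arg g^\sharp(z).
\]

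To prove the lemma, set $\delta(z)=\frac{g(z)-g(c)}{z-c}$, so the inequality reads $-\arg\delta(z)\le\arg z-\arg g(z)$; since $\arg z\in(0,\pi)$, $\arg\delta(z)\in[-\arg z,0]$ and $\arg g(z)\in[0,\arg z]$, the number $\arg z+\arg\delta(z)-\arg g(z)$ lies in $(-\pi,\pi)$ and equals $\arg\frac{z\delta(z)}{g(z)}$, so the inequality is equivalent to $\mathrm{Im}\frac{z\delta(z)}{g(z)}\ge 0$. Using $g(z)=(z-c)\delta(z)+g(c)$ together with $\mathrm{Im}\bigl(z\delta(z)\bigr)=(\mathrm{Im}\,z)\bigl(\beta+\int\frac{s^2}{(c+s)|z+s|^2}\,d\rho(s)\bigr)$ (here $\beta$ and $\rho$ are those of $g$), a short computation reduces the claim to the real inequality
\[
   g(c)\Bigl(\beta+\int_{(0,\infty)}\frac{s^2}{(c+s)|z+s|^2}\,d\rho(s)\Bigr)\ \ge\ c\,|\delta(z)|^2 .
\]
This is where I expect the one substantive step, and I would settle it by Cauchy--Schwarz. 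Let $\nu$ be the positive measure on $\{\ast\}\sqcup(0,\infty)$ with $\nu(\{\ast\})=\beta$ and $d\nu(s)=\frac{s}{c+s}\,d\rho(s)$, and put $u(\ast)=1$, $u(s)=\frac{1}{z+s}$, $w(\ast)=1$, $w(s)=s$. Then $\delta(z)=\int u\,d\nu$, the bracket above equals $\int w|u|^2\,d\nu$, and $g(c)=\alpha+c\int w^{-1}\,d\nu\ge c\int w^{-1}\,d\nu$; hence
\[
   c\,|\delta(z)|^2 = c\,\Bigl|\int u\,d\nu\Bigr|^2 \le c\Bigl(\int w^{-1}\,d\nu\Bigr)\Bigl(\int w|u|^2\,d\nu\Bigr) \le g(c)\int w|u|^2\,d\nu ,
\]
which is exactly what is wanted. (For $c=0$ the right-hand side is $0$ and nothing is needed.) The surrounding details — convergence of the integrals for a fixed $z$, removability of the singularities at $a$ and $b$, and continuity of $h$ at $0$ — are routine.

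Granting the lemma, the theorem follows at once: apply it to $g=f$ at $c=a$, and to $g=f^\sharp$ at $c=b$ (so $g^\sharp=(f^\sharp)^\sharp=f$), to get $\arg\frac{1}{\delta_a(z)}\le\arg f^\sharp(z)$ and $\arg\frac{1}{\delta^\sharp_b(z)}\le\arg f(z)$. Both left-hand sides are $\ge 0$ and $\arg f^\sharp(z)+\arg f(z)=\arg z<\pi$, so
\[
   0\ \le\ \arg h(z)\ =\ \arg\frac{1}{\delta_a(z)}+\arg\frac{1}{\delta^\sharp_b(z)}\ \le\ \arg z\ <\ \pi ,
\]
that is, $h(\mathbb{H}_+)\subseteq\overline{\mathbb{H}_+}$; if $h$ is non-constant the open mapping theorem puts the image inside $\mathbb{H}_+$. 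By Loewner's theorem $h$ is operator monotone on $(0,\infty)$, and since $h$ extends continuously to $t=0$ it is operator monotone on $[0,\infty)$.
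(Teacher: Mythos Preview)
Your proof is correct, and it takes a genuinely different route from the paper's. The paper deduces Theorem 1 from a more general statement (Theorem 4(1)), proved by estimating $\arg h(z)$ only on the boundary of large half-disks $H(r)=\{|z|\le r,\ \mathrm{Im}\,z\ge 0\}$: on $(-\infty,0)$ one uses $\arg f(z)+\arg g(z)\ge\arg z$, while on the large semicircle a geometric lemma bounds $\arg(z-a)$ by $\tfrac{\pi+\arg z}{2}$; the minimum principle for $\mathrm{Im}\,h$ then forces $h(H(r))\subset\overline{\mathbb{H}_+}$. Because this needs $h$ continuous on $\overline{\mathbb{H}_+}$, the paper first treats the special case and then recovers the general one by approximating $f$ with $f_p(t)=f(t^p)$, $p\nearrow 1$.

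Your key lemma---$\arg\dfrac{z-c}{g(z)-g(c)}\le\arg g^\sharp(z)$ for every non-constant $g\in\mathbb{P}_+[0,\infty)$---is proved pointwise on all of $\mathbb{H}_+$ via Cauchy--Schwarz in the integral representation, so no boundary argument, no minimum principle, and no approximation by $f_p$ are needed. It even yields the sharper conclusion $\arg h(z)\le\arg z$, and, since the lemma uses nothing about $g$ beyond $g\in\mathbb{P}_+[0,\infty)$, the same two applications (to $f$ and to $g$) give the paper's Theorem 4(1) directly: from $\arg(f(z)g(z)/z)\ge 0$ one gets $\arg h(z)\le 2\arg z-\arg f(z)-\arg g(z)\le\arg z$. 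What the paper's approach buys in return is uniformity: the same boundary-plus-approximation scheme handles the variants in Theorem 4(2) and Theorem 8, whose factors $\dfrac{g_j(t)(t-b_j)}{tg_j(t)-b_jg_j(b_j)}$ are not divided differences of an operator monotone function and do not immediately fit your lemma.
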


We can also show the operator monotonicity of other functions which have the form related to
the above one in Theorem 4.

\section{Main result}

For $f\in \mathbb{P}[0,\infty)$, we have the following integral representation:
\[  f(z) = f(0) + \beta z + \int_0^\infty \frac{\lambda z}{z+\lambda}dw(\lambda), \]
where $\beta \ge 0$ and 
\[  \int_0^\infty \frac{\lambda}{1+\lambda}dw(\lambda)<\infty   \]
(see \cite{bhatia}).
When $f(0)\ge 0$ (i.e., $f\in \mathbb{P}_+[0,\infty))$, 
it holds that
$0<\arg f(z) \le  \arg z$ for $z\in \mathbb{H}_+$ (i.e., $0< \arg z<\pi$).

For any $f\in \mathbb{P}_+[0,\infty )$ ($f\neq 0$), we define $f^\sharp$ as follows:
\[  f^\sharp(t) = \frac{t}{f(t)} \qquad t\in[0,\infty).  \]
Then it is well-known that $f^\sharp \in \mathbb{P}_+[0,\infty)$.

\vspace{5mm}

\begin{proposition}
Let $f $ be an operator monotone function on $(0,\infty)$ and $a$ be positive real.
\begin{enumerate}
  \item[(1)] When $f(t)$ is not constant, the function 
\[  g_1(t) = \frac{t-a}{f(t)-f(a)}  .  \]
is operator monotone on $[0, \infty)$.
  \item[(2)] When $f(t)\ge 0$  for $t\ge 0$, the function
\[  g_2(t) = \frac{f(t)(t-a)}{tf(t)-af(a)}   \]
is operator monotone on $[0,\infty)$. 
\end{enumerate}
\end{proposition}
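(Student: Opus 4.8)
The plan is to use Loewner's theorem in the form recalled above: to prove that a function $h$ is operator monotone on $[0,\infty)$ it suffices to exhibit an analytic continuation of $h$ to $\mathbb{H}_+$ mapping $\mathbb{H}_+$ into $\overline{\mathbb{H}_+}$, that is real on $(0,\infty)$ and has a finite limit at $0$. Consider (1). Being non-constant and operator monotone on $(0,\infty)$, $f$ is real-analytic with $f'>0$ on $(0,\infty)$ and maps $\mathbb{H}_+$ into $\mathbb{H}_+$; hence $f(a)\in\mathbb{R}$, $f(z)-f(a)$ does not vanish on $\mathbb{H}_+$ and vanishes only to first order at $z=a$, so $g_1$ is analytic on $\mathbb{H}_+$, analytic across $t=a$ (with value $1/f'(a)$), and real on $(0,\infty)$. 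The decisive step is to translate: set $G(t):=f(t+a)-f(a)$. Since $f$ is operator monotone on $(0,\infty)$, $G$ is operator monotone on $(-a,\infty)$, hence on $[0,\infty)$; as $f$ is strictly increasing, $G\ge 0$ on $[0,\infty)$ and $G(0)=0$, so $G$ is a non-constant element of $\mathbb{P}[0,\infty)$ with $G(0)=0$. Because $f(z)-f(a)=G(z-a)$ and $z\mapsto z-a$ carries $\mathbb{H}_+$ into $\mathbb{H}_+$, the proof of (1) reduces to the following claim: \emph{for every non-constant $G\in\mathbb{P}[0,\infty)$ with $G(0)=0$, the function $z/G(z)$ maps $\mathbb{H}_+$ into $\overline{\mathbb{H}_+}$.}

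To prove the claim — this is the heart of the matter — apply the integral representation recalled above to $G$; since $G(0)=0$,
\[
  G(z)=\beta z+\int_0^\infty\frac{\lambda z}{z+\lambda}\,dw(\lambda)\qquad(\beta\ge0,\ w\ge0),
\]
and therefore
\[
  -\frac{G(z)}{z}=-\beta+\int_0^\infty\lambda\cdot\frac{-1}{z+\lambda}\,dw(\lambda).
\]
For $z\in\mathbb{H}_+$ and $\lambda>0$ we have $z+\lambda\in\mathbb{H}_+$, so $-1/(z+\lambda)\in\mathbb{H}_+$; together with $\beta\ge0$, $\lambda>0$ and $dw\ge0$ this forces $\operatorname{Im}\!\bigl(-G(z)/z\bigr)\ge0$, i.e.\ $-G(z)/z$ is a Pick function. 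Since $G$ is non-constant it maps $\mathbb{H}_+$ into $\mathbb{H}_+$, so $G(z)\neq 0$ there, and hence $z/G(z)=-1\big/\bigl(-G(z)/z\bigr)$ maps $\mathbb{H}_+$ into $\overline{\mathbb{H}_+}$ via $\zeta\mapsto-1/\zeta$. This proves the claim, so $g_1(z)=(z-a)/G(z-a)$ maps $\mathbb{H}_+$ into $\overline{\mathbb{H}_+}$. The rest is routine: $g_1$ is real on $(0,\infty)$ and stays bounded as $t\to0^+$ (the numerator tends to $-a$, the denominator to $f(0^+)-f(a)\in[-\infty,0)$), so it has a finite limit at $0$, whence $g_1\in\mathbb{P}[0,\infty)$.

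For (2), dividing numerator and denominator of $g_2$ by $f(t)$ and using $a-af(a)/f(a)=0$ gives
\[
  g_2(t)=\frac{t-a}{\,t-af(a)/f(t)\,}=\frac{t-a}{F(t)-F(a)},\qquad F(t):=t-\frac{af(a)}{f(t)}.
\]
Here $f>0$ on $(0,\infty)$ (a zero of the nonnegative operator monotone $f$ would force $f\equiv0$ by monotonicity and real-analyticity, a degenerate case we discard), so $1/f$ is operator monotone decreasing on $(0,\infty)$, whence $-af(a)/f(t)$ is operator monotone; adding $t$ shows $F$ is operator monotone and, being strictly increasing, non-constant on $(0,\infty)$. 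Applying part (1) to $F$ with the same $a>0$ gives that $g_2=\dfrac{t-a}{F(t)-F(a)}$ is operator monotone on $[0,\infty)$. The only genuinely substantive points are the translation from $f$ to $G$ and the Pick-property of $-G(z)/z$ in (1); the inversion step, the behaviour at $t=0$, and the reduction of (2) to (1) are bookkeeping.
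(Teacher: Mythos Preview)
Your proof is correct, but it diverges from the paper's in both parts.

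For (1) the paper simply cites Uchiyama's Theorem~2.1, whereas you give a self-contained argument: translate to $G(t)=f(t+a)-f(a)\in\mathbb{P}_+[0,\infty)$ with $G(0)=0$, and then prove directly from the integral representation that $-G(z)/z$ is Pick, so $z/G(z)=G^\sharp(z)$ maps $\mathbb{H}_+$ into $\overline{\mathbb{H}_+}$. This is essentially a clean proof of the well-known fact (stated without proof earlier in the paper) that $G^\sharp\in\mathbb{P}_+[0,\infty)$ whenever $G\in\mathbb{P}_+[0,\infty)$, specialised to the translated function. Your route is more elementary and self-contained; the paper's is shorter by outsourcing the work.

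For (2) both arguments reduce to (1), but through different algebraic identities. The paper rewrites
\[
  g_2(z)=1-\frac{a}{z+f(a)\,g_1(z)}
\]
and reads off $\operatorname{Im}g_2(z)>0$ directly from $\operatorname{Im}g_1(z)>0$. You instead observe that dividing by $f(t)$ gives $g_2(t)=\dfrac{t-a}{F(t)-F(a)}$ with $F(t)=t-\dfrac{af(a)}{f(t)}$, note that $F$ is operator monotone on $(0,\infty)$ (as a sum of $t$ and the operator monotone $-af(a)/f(t)$), and invoke (1) with $F$ in place of $f$. The two identities are equivalent rearrangements of the same rational expression; your version has the pleasant feature that (2) becomes a formal corollary of (1) rather than a separate Pick-function computation.
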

\begin{proof}\;
(1) It follows from Theorem 2.1 in \cite{uchiyama}.

(2) Since $f\in \mathbb{P}_+[0,\infty)$, we have $0<\arg zf(z) < 2\pi$ for any $z\in \mathbb{H}_+$.
So we can define
\[  g_2(z) =\frac{f(z)(z-a)}{zf(z)-af(a)},  \qquad  z\in \mathbb{H}_+  \]
and $g_2(z)$ is holomorphic on $\mathbb{H}_+$. 
Because $g_2([0,\infty))\subset [0,\infty)$ and $g_2(z)$ is continuous on
$\mathbb{H}_+ \cup[0,\infty)$, it suffices to show that $g_2(\mathbb{H}_+)\subset  \mathbb{H}_+$.
By the calculation
\begin{align*}
  g_2(z) & = \frac{zf(z)-af(a)+af(a)-f(z)a}{zf(z)-af(a)}
                    =1 -\frac{a(f(z)-f(a))}{zf(z)-af(a)}  \\
       & = 1 -\frac{a}{\dfrac{zf(z)-af(a)}{f(z)-f(a)} }
           = 1 -\frac{a}{z + f(a)g_1(z)},
\end{align*}
we have
\[  {\rm Im} g_2(z) = - {\rm Im}\frac{a}{z + f(a)g_1(z)} =  {\rm Im} \frac{a(z+f(a)g_1(z))}{|z+f(a)g_1(a)|^2}.  \]
When  $z\in \mathbb{H}_+$, ${\rm Im} g_1(z)>0$ by (1)
and  ${\rm Im}g_2(z)>0$.
So the function $g_2(t)$  belongs to $\mathbb{P}_+[0,\infty)$.
\end{proof}

\vspace{5mm}

For any  $z=e^{i\theta}$ ($0<\theta <\pi$) and any integer $n (\ge 2)$, we set
\[  w = \frac{\sin \theta}{\sin \dfrac{\pi+(n-1)\theta}{n}}e^{i(\pi+(n-1)\theta )/n}  .  \]
Since ${\rm Im}z = {\rm Im}w$,  $l = z - w >0$.
Then we can get
\[    \sup \{  l  \mid  0<\theta <\pi \}  = \lim_{\theta\to \pi -0}
      \frac{\sin \frac{\pi - \theta}{n}}{\sin \frac{(n-1)(\pi-\theta)}{n}} = \frac{1}{n-1}  .\]
So we have the following:

\begin{lemma} \;  For any $z\in \mathbb{H}_+$ and a positive integer $n$ ($n\ge 2$), we have
\[  \arg z < \arg (z-l) < \frac{\pi + (n-1)\arg  z}{n} \quad \text{if } \quad 0<l\le \frac{|z|}{n-1}.  \]
\end{lemma}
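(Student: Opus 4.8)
The plan is to reduce everything to a one–dimensional comparison of real parts on the horizontal line through $z$. Since $\arg(c\zeta)=\arg\zeta$ for every $c>0$, we may rescale and assume $|z|=1$, so put $z=e^{i\theta}$ with $\theta=\arg z\in(0,\pi)$; the hypothesis then reads $0<l\le\tfrac1{n-1}$. I would reuse the auxiliary point $w=\dfrac{\sin\theta}{\sin\phi}\,e^{i\phi}$ with $\phi=\dfrac{\pi+(n-1)\theta}{n}$ from the paragraph preceding the lemma. Because $\mathrm{Im}\,w=\sin\theta=\mathrm{Im}\,z=\mathrm{Im}(z-l)$, the three points $z$, $z-l$, $w$ all lie on the line $\{\,\mathrm{Im}\,\zeta=\sin\theta\,\}$, and on that line the function $x\mapsto\arg(x+i\sin\theta)$ is strictly decreasing (its derivative is $-\sin\theta/(x^2+\sin^2\theta)<0$). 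Using also $\arg z=\theta$ and $\arg w=\phi$, the two asserted inequalities for $\arg(z-l)$ become equivalent to the single chain $\mathrm{Re}\,w<\mathrm{Re}(z-l)<\mathrm{Re}\,z$, i.e.\ to $0<l<z-w$.

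First I would dispose of the left inequality $\arg z<\arg(z-l)$: since $l>0$, the point $z-l$ has the same positive imaginary part and a strictly smaller real part than $z$, so by the monotonicity just noted $\arg(z-l)>\arg z$; this step does not use the upper bound on $l$. For the right inequality it suffices, as above, to show $z-w\ge l$. But $z-w$ is exactly the positive real number $l(\theta)=\dfrac{\sin\frac{\pi-\theta}{n}}{\sin\frac{(n-1)(\pi-\theta)}{n}}$ evaluated before the lemma, and that computation gives $l(\theta)\ge\tfrac1{n-1}$ for all $\theta\in(0,\pi)$, with $\tfrac1{n-1}$ attained only in the limit $\theta\to\pi-0$. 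Combining with the hypothesis $l\le\tfrac1{n-1}$ yields $l\le\tfrac1{n-1}\le l(\theta)=z-w$, hence $\mathrm{Re}(z-l)\ge\mathrm{Re}\,w$ and $\arg(z-l)\le\phi$; for $n\ge3$ one has $l(\theta)>\tfrac1{n-1}$ strictly on $(0,\pi)$, which upgrades this to a strict inequality.

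The only real content, and the step I expect to be the main obstacle, is the quantitative estimate $z-w\ge\tfrac1{n-1}$, equivalently the monotonicity of $\theta\mapsto l(\theta)$ on $(0,\pi)$. Writing $\psi=\pi-\theta$ and $p=1/n$, this amounts to $(n-1)\sin(p\psi)\le\sin\bigl((n-1)p\psi\bigr)$ for $\psi\in(0,\pi)$, which follows from the concavity of $\sin$ on $[0,\pi]$ (so $\sin(r\psi)\ge r\sin\psi$ for $r\in[0,1]$, $\psi\in[0,\pi]$); since this is already recorded before the lemma, what remains of the proof is essentially the elementary observation that $\arg$ decreases along a horizontal ray in $\mathbb{H}_+$. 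The one point needing a little care is the boundary case $n=2$: there $l(\theta)\equiv1=\tfrac1{n-1}$, so $e^{i\theta}-l=w$ when $l=1$ and the right inequality becomes an equality at the endpoint $l=|z|/(n-1)$; this is the only place where strictness forces $l<|z|/(n-1)$ (or a trivial direct check that $\arg(e^{i\theta}-1)=\tfrac{\pi+\theta}{2}$).
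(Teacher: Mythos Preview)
Your approach is exactly the paper's: normalize to $|z|=1$, place the auxiliary point $w$ with $\arg w=\phi=\tfrac{\pi+(n-1)\theta}{n}$ on the horizontal line $\mathrm{Im}\,\zeta=\sin\theta$, reduce the right inequality to $l<z-w$, and then show $z-w=\sin\tfrac{\pi-\theta}{n}\big/\sin\tfrac{(n-1)(\pi-\theta)}{n}\ge\tfrac1{n-1}$. (The paper's ``$\sup$'' in the paragraph before the lemma should evidently read ``$\inf$''; you interpret it correctly as a lower bound.)

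One slip: your displayed inequality ``$(n-1)\sin(p\psi)\le\sin\bigl((n-1)p\psi\bigr)$'' has the sign reversed. What is needed for $z-w\ge\tfrac1{n-1}$ is the opposite, $(n-1)\sin(p\psi)\ge\sin\bigl((n-1)p\psi\bigr)$, and this is precisely what your concavity remark proves once applied with $r=\tfrac1{n-1}$ and $x=(n-1)p\psi\in(0,\pi)$, giving $\sin(p\psi)=\sin(rx)\ge r\sin x=\tfrac1{n-1}\sin\bigl((n-1)p\psi\bigr)$. Your observation about the boundary case $n=2$, $l=|z|$ is correct (the right inequality degenerates to an equality there) and is harmless for the paper's applications, which always take $|z|$ strictly larger than the relevant shifts.
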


\vspace{5mm}

Now we can prove the following theorem and we remark that Theorem 1 easily follows from this:

\begin{theorem} \;  
Let $n$ be a positive integer,  $a,b,b_1, \ldots ,b_n \ge 0$ and $f, g, g_1,\ldots, g_n$ be non-constant, 
non-negative operator monotone functions on $[0,\infty)$.
\begin{enumerate}
  \item[(1)]  If $\dfrac{f(t)g(t)}{t}$ is operator monotone on $[0,\infty)$,
then the function
\[  h(t) = \frac{(t-a)(t-b)}{(f(t)-f(a))(g(t)-g(b))}   \]
is operator monotone on $[0.\infty)$ for any $a,b \ge 0$.
  \item[(2)]   If $\dfrac{f(t)}{\prod_{i=1}^n g_i(t)}$ is operator monotone on $[0,\infty)$,
then the function
\[  h(t) = \frac{(t-a)}{(f(t)-f(a))} \prod_{i=1}^n \frac{g_i(t)(t-b_i)}{tg_i(t)-b_ig_i(b_i)}   \]
is operator monotone on $[0.\infty)$ for any $a,b \ge 0$.
\end{enumerate}
\end{theorem}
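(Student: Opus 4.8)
The plan is to run the Loewner-theoretic argument. The function $h$ is continuous on $[0,\infty)$ with values in $[0,\infty)$, and it extends holomorphically to $\mathbb{H}_+$: the denominators do not vanish there since $f(z),g(z),g_i(z)$ lie in $\mathbb{H}_+$ while $f(a),g(b),g_i(b_i)$ are real, and $\arg\bigl(zg_i(z)\bigr)\in(0,2\pi)$, exactly as in the proof of Proposition 2. Hence it suffices to prove $h(\mathbb{H}_+)\subseteq\overline{\mathbb{H}_+}$; reflection across $[0,\infty)$ then gives the continuation to $\mathbb{H}_-$. I would write $g_1(t)=\dfrac{t-a}{f(t)-f(a)}$, $g_2(t)=\dfrac{t-b}{g(t)-g(b)}$ and $P_i(t)=\dfrac{g_i(t)(t-b_i)}{tg_i(t)-b_ig_i(b_i)}$, each operator monotone and non-negative by Proposition 2 (when a parameter is $0$ the corresponding factor is simply the sharp of a function vanishing at $0$), so that $h=g_1g_2$ in part (1) and $h=g_1\prod_{i=1}^nP_i$ in part (2). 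For $z\in\mathbb{H}_+$ it then suffices to bound the sum $\arg g_1(z)+\arg g_2(z)$, resp. $\arg g_1(z)+\sum_{i}\arg P_i(z)$ — whose summands all lie in $[0,\pi)$ — by $\arg z$; since $h(z)$ is the product of the corresponding factors, this bound forces the sum to equal the principal argument of $h(z)$, so $h(z)\in\overline{\mathbb{H}_+}$.

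The two inequalities that make this work are refinements of "$g_1,g_2,P_i\in\mathbb{P}_+$''. First, for non-constant $p\in\mathbb{P}_+[0,\infty)$ and $c\ge0$,
\[
 \arg\frac{z-c}{p(z)-p(c)}\ \le\ \arg p^\sharp(z)\qquad(z\in\mathbb{H}_+),
\]
which follows from the identity
\[
 p^\sharp(z)\cdot\frac{p(z)-p(c)}{z-c}=\frac{z\bigl(p(z)-p(c)\bigr)}{p(z)(z-c)}=1-p(c)\cdot\frac{p^\sharp(z)-p^\sharp(c)}{z-c}
\]
(valid since $p(c)p^\sharp(c)=c$): the factor $\dfrac{p^\sharp(z)-p^\sharp(c)}{z-c}$ is the reciprocal of the operator monotone function $\dfrac{z-c}{p^\sharp(z)-p^\sharp(c)}$ (Proposition 2 applied to $p^\sharp$), hence lies in $\overline{\mathbb{H}_-}$; multiplying by $-p(c)\le0$ and adding $1$ keeps us in $\overline{\mathbb{H}_+}$, which is the stated inequality. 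Second, for non-constant $g\in\mathbb{P}_+[0,\infty)$ and $b\ge0$,
\[
 \arg\frac{g(z)(z-b)}{zg(z)-bg(b)}\ \le\ \arg g(z),\qquad\text{equivalently}\qquad \frac{zg(z)-bg(b)}{z-b}\in\overline{\mathbb{H}_+}.
\]
A soft half-plane argument does not suffice here because in $\dfrac{zg(z)-bg(b)}{z-b}=g(z)+b\,\dfrac{g(z)-g(b)}{z-b}$ the second summand lies in $\overline{\mathbb{H}_-}$; instead I would use the representation $g(z)=g(0)+\gamma z+\int_0^\infty\frac{\lambda z}{z+\lambda}\,d\rho(\lambda)$, which gives $\dfrac{g(z)-g(b)}{z-b}=\gamma+\int_0^\infty\frac{\lambda^2}{(z+\lambda)(b+\lambda)}\,d\rho(\lambda)$ and hence
\[
 \mathrm{Im}\,\frac{zg(z)-bg(b)}{z-b}=\mathrm{Im}(z)\Bigl(\gamma+\int_0^\infty\frac{\lambda^3}{(b+\lambda)\,|z+\lambda|^2}\,d\rho(\lambda)\Bigr)\ \ge\ 0 .
\]

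Now the hypotheses are used, only to bound $\arg f(z)$. In part (1), $\psi:=fg/t$ is in $\mathbb{P}_+[0,\infty)$ and equals $f(z)g(z)/z$; a connectedness argument shows these arguments add exactly — the continuous $2\pi\mathbb{Z}$-valued function $\arg f(z)+\arg g(z)-\arg z-\arg\psi(z)$ on $\mathbb{H}_+$ vanishes as $z\to(0,\infty)$ — so $\arg f(z)+\arg g(z)-\arg z=\arg\psi(z)\ge0$, and by the first inequality (with $p=f$ and with $p=g$)
\[
 \arg g_1(z)+\arg g_2(z)\le\arg f^\sharp(z)+\arg g^\sharp(z)=2\arg z-\arg f(z)-\arg g(z)=\arg z-\arg\psi(z)\le\arg z<\pi .
\]
In part (2), applying the same remark to $\Phi:=f\big/\prod_{i}g_i\in\mathbb{P}_+[0,\infty)$ gives $\arg f(z)=\arg\Phi(z)+\sum_i\arg g_i(z)\ge\sum_i\arg g_i(z)$, and combining the first inequality (with $p=f$) with the second (for each $P_i$),
\[
 \arg g_1(z)+\sum_{i=1}^n\arg P_i(z)\le\arg f^\sharp(z)+\sum_i\arg g_i(z)=\arg z-\arg f(z)+\sum_i\arg g_i(z)\le\arg z<\pi .
\]
Since these sums are also $\ge0$, we conclude $h(z)\in\overline{\mathbb{H}_+}$, so $h$ is operator monotone on $[0,\infty)$ in both cases; Theorem 1 is the special case $g=f^\sharp$ of part (1), for which $\psi\equiv1$.

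The step I expect to be the real obstacle is the second refined inequality $\dfrac{zg(z)-bg(b)}{z-b}\in\overline{\mathbb{H}_+}$: unlike the first, it is not a one-line half-plane manipulation and genuinely needs the integral representation of $g$. Two smaller points also require care: justifying that the various arguments add up exactly rather than merely modulo $2\pi$ (the connectedness argument above, which works because all the factors are Pick functions that are real and positive near $(0,\infty)$), and the degenerate cases — a parameter equal to $0$, one of the functions being affine through the origin so that its sharp is constant, or $p(c)=0$ — in each of which the offending quotient reduces to a constant and the inequalities are immediate.
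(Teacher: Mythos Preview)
Your proof is correct and takes a genuinely different route from the paper's. The paper never proves your two refined inequalities; instead it argues on the boundary of large half-disks $H(r)=\{|z|\le r,\ \mathrm{Im}\,z\ge 0\}$: on $(-\infty,0)$ one bounds $\arg h(z)$ using only $\arg f(z)+\arg g(z)\ge\arg z$, while on the large semicircle one invokes the elementary Lemma~3 to control $\arg(z-a)$ and $\arg(z-b)$ separately. The minimum principle for the harmonic function $\mathrm{Im}\,h$ then pushes the bound into the interior. Because this boundary argument needs $h$ to be continuous on $\overline{\mathbb{H}_+}$ and the denominator to be nonzero on $(-\infty,0)$, the paper first treats that special case and then reaches the general case by the regularization $f_p(t)=f(t^p)$, $g_p(t)=t^{1-p}g(t^p)$ (and analogously in part~(2)), letting $p\to 1^-$.

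Your approach trades all of this for the two pointwise estimates $\arg\dfrac{z-c}{p(z)-p(c)}\le\arg p^{\sharp}(z)$ and $\arg P_i(z)\le\arg g_i(z)$, valid on the whole of $\mathbb{H}_+$. The first follows from your neat identity together with Proposition~2(1); the second is where the integral representation does real work, and your computation of $\mathrm{Im}\dfrac{zg(z)-bg(b)}{z-b}$ is correct. The payoff is that you avoid Lemma~3, the minimum principle, and the entire approximation step; the paper's method, on the other hand, is more modular and is what they reuse (with the same boundary/regularization scheme) in the later Theorem~8 with $m$ factors in the numerator.
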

\begin{proof}   \; (1)  By $f,g\in \mathbb{P}_+[0,\infty)$ and Proposition 2 (1),
\[  \frac{t-a}{f(t)-f(a)} \text{ and } \frac{t-b}{g(t)-g(b)}  \]
are operator monotone on $[0,\infty)$.
Therefore
\[  h(z) = \frac{(z-a)(z-b)}{(f(z)-f(a))(g(z)-g(b))}  \]
is holomorphic on $\mathbb{H}_+$, continuous on $\mathbb{H}_+\cup [0,\infty)$ 
and satisfies $h([0,\infty))\subset [0, \infty)$ and 
\[   \arg h(z) = \arg \frac{z-a}{f(z)-f(a)} + \arg \frac{z-b}{g(z)-g(b)}>0 
   \text{ for } z\in\mathbb{H}_+.  \]

We assume that $f(z)$ and $g(z)$ are continuous on  the closure $\overline{\mathbb{H}_+}$ of $\mathbb{H}_+$
and 
\[  f(t)-f(a)\neq 0 \text{ and } g(t)-g(b)\neq 0  \text{ for any } t\in(-\infty,0). \]
Then $h(z)$ is continuous on $\overline{\mathbb{H}_+}$.

In the case $z\in (-\infty,0)$, i.e.,  $|z|>0$ and $\arg z =\pi$,
we have
\begin{align*}
          &  \arg h(z)  \\
     =  &  \arg  (z-a) - \arg (f(z)-f(a)) + \arg (z-b) 
                                          - \arg (g(z)-g(b))  \\
      \le & \pi -\arg f(z)  + \pi - \arg g(z) \\
      \le &  2 \pi -\arg z  =\pi   \qquad (\text{since } \arg f(z) + \arg g(z) -\arg z \ge 0) .
\end{align*}
So it holds $0\le \arg h(z) \le \pi$.

In the case that $z\in \mathbb{H}_+$ satisifying $|z|>\max\{a,b\}$,
it holds that
\[  \arg (z-a),  \arg (z-b) < \frac{\pi + \arg z}{2}  \]
by Lemma 3.
Since
\begin{align*}
   \arg  h(z) & = \arg  (z-a) - \arg (f(z) - f(a)) + \arg (z-b) 
                                            - \arg  (g(z) - g(b))  \\
          & \le  \frac{\pi+ \arg z}{2} -\arg  f(z) + \frac{\pi+ \arg z}{2}
                                            - \arg  g (z) \\
          & = \pi +\arg z - \arg f(z) - \arg g(z) \le \pi,  
\end{align*}
we have $0 < \arg h(z) < \pi$.

For $r>0$, we define 
$H(r) = \{z\in \mathbb{C} \mid |z|\le r, {\rm Im}z \ge 0  \}$.
Whenever $r> l =\max \{a, b\}$, we can get
\[  0 \le \arg h(z) \le \pi  \]
on the boundary of $H(r)$.
Since $h(z)$ is holomorphic on $H(r)$,  ${\rm Im} h(z)$ is harmonic on $H(r)$.
Because ${\rm Im} h(z) \ge 0$ on the boundary of $H(r)$,
we have $h(H(r)) \subset \overline{\mathbb{H}_+}$ by the minimum principle of  harmonic functions.
This implies
\[  h( \overline{\mathbb{H}_+}) = h( \bigcup_{r>l}H(r))  \subset 
         \bigcup_{r>l} h(H(r))  \subset \overline{\mathbb{H}_+},   \]
and $h \in \mathbb{P}_+[0, \infty)$.

In general case,  
we set 
\[  \frac{f(t)g(t)}{t}=F(t) \text{ and } \tilde{f}(t)=\frac{f(t)}{F(t)}  \qquad (t\ge 0) .  \]
By the relation $\tilde{f}(t)g(t)=t$,  we have $\tilde{f} \in \mathbb{P}_+[0,\infty)$.
We define the function  $f_p$, $\tilde{f}_p$ and $g_p$  ($0<p<1$) as follows:
\[    f_p(z) = f(z^p) , \quad \tilde{f}_p(z)=\tilde{f}(z^p),  \]
and 
\[g_p(z) = (\tilde{f}_p)^\sharp(z)= \frac{z}{\tilde{f}_p(z)}=\frac{zF(z^p)}{f(z^p)}=z^{1-p} g(z^p) 
\]  
for  $z\in \overline{\mathbb{H}_+}$.  
Then we have $f_p$, $g_p \in \mathbb{P}_+[0,\infty)$ and
\[  h_p(z) = \frac{(z-a)(z-b)}{(f_p(z)-f_p(a))( g_p(z) - g_p(b) )}  \]
is holomorphic on $\mathbb{H}_+$ and continuous  on $\overline{\mathbb{H}_+}$.
By the fact  $\dfrac{f_p(t)g_p(t)}{t}=F(t^p)$  is operator monotone on $[0,\infty)$,
$h_p(t)$ becomes operator monotone on $[0,\infty)$.
Since 
\begin{align*}
    h_p(t) & = \frac{(t-a)(t-b)}
     {(f_p(t)-f_p(a))(g_p(t)-g_p(b))}  \\
           & = \frac{(t-a)(t-b)}
     {(f(t^p)-f(a^p))(t^{1-p}g(t^p)- b^{1-p}g(b^p))}  \qquad \text{for } t\ge 0 ,
\end{align*}
we have
\[    \lim_{p\to 1-0} h_p(t) = h(t)  .  \]
So we can get the operator monotonicity of $h(t)$.

(2)  We show this by the similar way as (1).
By Proposition 2,
\[ \frac{t-a}{f(t)-f(a)} \text{ and } \frac{g_i(t)(t-b_i)}{tg_i(t)-b_ig_i(b_i)}
  \quad (i=1,2,\ldots, n)  \]
are operator monotone on $[0,\infty)$.
So we have that 
\[  h(z) = \frac{z-a}{f(z)-f(a)}
   \prod_{i=1}^n \frac{g_i(z)(z-b_i)}{zg_i(z)-b_ig_i(b_i)} \]
is holomorphic on $\mathbb{H}_+$, 
continuous on $\mathbb{H}_+\cup [0,\infty)$ and satisfies
$h([0,\infty))\subset [0,\infty)$ and 
\[  \arg h(z)=\arg \frac{z-a}{f(z)-f(a)}+ \sum_{i=1}^n \arg \frac{g_i(z)(z-b_i)}{zg_i(z)-b_ig_i(b_i)}>0 \]
 for $z\in \mathbb{H}_+$.

We assume that $f(z)$ and $g_i(z)$ ($i=1,2,\ldots,n$) are continuous on 
$\overline{\mathbb{H}_+}$ and
\[  f(t)-f(a)\neq 0 \text{ and } tg_i(t)-b_ig_i(b_i)\neq 0 
   \text{ for any } t\in (-\infty,0).  \]
Then $h(z)$ is continuous on $\overline{\mathbb{H}_+}$.

In the case $z\in (-\infty,0)$, i.e.,  $|z|>0$ and $\arg z =\pi$,
we have
\begin{align*}
          &  \arg h(z)  \\
     =  &  \arg  (z-a) + \sum_{i=1}^n \arg g_i(z)(z-b_i) -\arg (f(z)-f(a))  
                                        - \sum_{i=1}^n \arg (zg_i(z)-b_ig_i(b_i))  \\
    \le & \pi + \sum_{i=1}^n \arg g_i(z) + n\pi -\arg f(z)  - n \pi 
        \quad (\text{since }\arg (zg_i(z)-b_ig_i(b_i)) \ge \pi) \\
    \le &  \pi   \qquad (\text{since } \arg f(z) - \sum_{i=1}^n \arg g_i(z) \ge 0) .
\end{align*}
So it holds $0\le \arg h(z) \le \pi$.

In the case $z\in \mathbb{H}_+$ satisifying $|z|>n\max\{a,b_1,b_2,\ldots,b_n\}$,
it holds that
\[  \arg (z-a),  \arg (z-b) < \frac{\pi + n\arg z}{n+1}  \]
by Lemma 3.
We may assume that there exists a number $k$ ($1\le k \le n$) such that
\[  \arg (zg_i(z))\le \pi \quad (i\le k), \qquad
    \arg (zg_i(z))>\pi \quad (i>k) . \]
Since
\begin{align*}
     & \arg  h(z) \\
   = & \arg  (z-a) + \sum_{i=1}^n \arg (z-b_i) +\sum_{i=1}^n \arg g_i(z) \\
     & \qquad  - \arg (f(z) - f(a)) - \sum_{i=1}^n \arg (zg_i(z) - b_ig_i(b_i))  \\
   \le & \frac{\pi+ n\arg z}{n+1}\times (n+1) + \sum_{i=1}^n \arg g_i(z) \\
       & \qquad -\arg  f(z) - \sum_{i=1}^k \arg zg_i(z) - (n-k)\pi \\
   = & \pi + n \arg z + \sum_{i=k+1}^n \arg g_i(z) - \arg f(z) - k\arg z -(n-k)\pi \\
   \le & \pi +(n-k) \arg z - (n-k) \pi \le \pi,  
\end{align*}
we have $0 \le \arg h(z) \le \pi$.

This means that  it holds
\[  0\le \arg h(z) \le \pi  \]
if $z$ belongs to the boundary of $H(r)=\{z\in\mathbb{C}\mid |z|\le r, {\rm Im} z\ge 0 \}$
for a sufficiently large $r$.
Using the same argument in (1), we can prove the operator monotonicity of $h$.

In general case, we define functions, for $p$ ($0<p<1$), as follows:
\[  f_p(t) = f(t^p), \quad g_{i,p}(t) = g_i(t^p) \quad (i=1,2,\ldots, n).  \]
Since $f, g_i\in \mathbb{P}_+[0,\infty)$,
\[ 0 < \arg f_p(z) <\pi, \quad 0 < \arg z g_{i,p}(z) <2 \pi  \]
for $z\in \mathbb{H}_+$.
This means that $f_p(z)$ and $g_{i,p}(z)$ are continuous on $\overline{\mathbb{H}_+}$ and 
\[ f_p(t)-f_p(a) \neq 0 \text{ and } tg_{i,p}(t)-b_ig_{i,p}(b_i)\neq 0 
   \text{ for any } t \in (-\infty, 0).  \]
Since
\[  \frac{f_p(t)}{\prod_{i=1}^n g_{i,p}(t)} 
  = \frac{f(t^p)}{\prod_{i=1}^n g_i(t^p)}  \qquad (0<p<1)\]
is operator monotone on $[0,\infty)$,
we can get the operator monotonicity of
\begin{align*}
   h_p(t) & = \frac{t-a}{f_p(t)-f_p(a)} 
   \prod_{i=1}^n \frac{g_{i,p}(t)(t-b_i)}{tg_{i,p}(t)-b_ig_{i,p}(b_i)} \\
   & = \frac{t-a}{f(t^p)-f(a^p)}
   \prod_{i=1}^n \frac{g_i(t^p)(t-b_i)}{tg_i(t^p)- b_ig_i(b_i^p)}.
\end{align*} 
So we can see that
\[  h(t) = \lim_{p\to 1-0} h_p(t)  \]
is operator monotone on $[0,\infty)$.
\end{proof}

\vspace{5mm}

\begin{remark}
%\noindent
%{\bf Remark.}
Using Proposition 2 and Theorem 4, we can prove the operator monotonicity of 
the concrete functions in \cite{petzhase}.
Since $t^a$ ($0<a<1$) and $\log t$ is operator monotone on $(0,\infty)$,
\begin{align*}
  f_a(t) & = a(1-a)\frac{(t-1)^2}{(t^a-1)(t^{1-a}-1)} \qquad (-1<a<2) \\
    & = 
    \begin{cases} 
       a(a-1)\dfrac{t^{-a}(t-1)^2}{(t^{-a}-1)(t\cdot t^{-a}-1)}  \quad & -1<a<0 \\
       \dfrac{t-1}{\log t}  & a=0,1 \\
       a(1-a)\dfrac{(t-1)^2}{(t^a-1)(t^{1-a}-1)} & 0<a<1 \\
       a(a-1)\dfrac{t^{a-1}(t-1)^2}{(t^{a-1}-1)(t\cdot t^{a-1}-1)} & 1<a<2
    \end{cases}
\end{align*}
becomes operator monotone.
\end{remark}

\vspace{5mm}

\begin{corollary} Let $f\in \mathbb{P}_+(0,\infty)$ and both $f$ and $f^\sharp$
be not constant.
For any $a>0$, we define
\[  h_a(t) = \frac{(t-a)(t-a^{-1})}{(f(t)-f(a))(f^\sharp(t)-f^\sharp(a^{-1}))}
   \qquad t\in (0,\infty).  \]
Then we have
\begin{enumerate}
  \item[(1)] $h_a$ is operator monotone on $(0,\infty)$.
  \item[(2)] $f(t)=t\cdot f(t^{-1})$ implies $h_a(t)=t\cdot h_a(t^{-1})$.
  \item[(3)] $a=1$ and $f(t^{-1})=f(t)^{-1}$ implies 
$h_1(t)=t \cdot h_1(t^{-1})$.
\end{enumerate}
\end{corollary}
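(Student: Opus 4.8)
The plan is to obtain (1) as a direct specialization of Theorem 4 and to verify (2) and (3) by substituting the relevant functional equations and reducing to an elementary polynomial identity. For (1), I would apply Theorem 4 (1) with $g=f^\sharp$ and $b=a^{-1}$: the hypotheses hold since $f$ and $f^\sharp$ are non-constant, non-negative operator monotone functions, and $\dfrac{f(t)f^\sharp(t)}{t}\equiv 1$ is operator monotone. Hence $h_a$ is operator monotone. Theorem 4 is phrased for $\mathbb{P}_+[0,\infty)$, but its proof uses only the continuation to $\mathbb{H}_+$, continuity on $\mathbb{H}_+\cup(0,\infty)$, and the substitutions $z\mapsto z^p$ with $0<p<1$ (which map $\mathbb{H}_+$ and $(0,\infty)$ into themselves), so it carries over verbatim to $f\in\mathbb{P}_+(0,\infty)$; one also notes that $f$ and $f^\sharp$ being strictly increasing makes the simple zeros of $f(t)-f(a)$ at $t=a$ and of $f^\sharp(t)-f^\sharp(a^{-1})$ at $t=a^{-1}$ cancel the corresponding zeros of $(t-a)(t-a^{-1})$, so that $h_a$ is continuous and positive on $(0,\infty)$.

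For (2), assume $f(t)=t\,f(t^{-1})$ and write $u=f(t)$, $\alpha=f(a)$. Then this equation together with the definition of $\sharp$ yields $f(t^{-1})=u/t$, $f^\sharp(t)=t/u$, $f^\sharp(t^{-1})=1/u$, and $f^\sharp(a^{-1})=1/\alpha$. Using the factorizations $t^{-1}-a=-at^{-1}(t-a^{-1})$ and $t^{-1}-a^{-1}=-a^{-1}t^{-1}(t-a)$, the numerator of $h_a(t^{-1})$ equals $t^{-2}(t-a)(t-a^{-1})$. Substituting all of this into $t\,h_a(t^{-1})$ and clearing denominators, the identity $h_a(t)=t\,h_a(t^{-1})$ reduces to the polynomial identity $(u-\alpha)(t\alpha-u)=(u-t\alpha)(\alpha-u)$, which holds identically.

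For (3), take $a=1$ (so $a^{-1}=1$) and assume $f(t^{-1})=f(t)^{-1}$. Putting $t=1$ gives $f(1)^2=1$, hence $f(1)=1$ since $f\ge 0$, and therefore $f^\sharp(1)=1$ and $h_1(t)=\dfrac{(t-1)^2}{(f(t)-1)(f^\sharp(t)-1)}$. With $u=f(t)$ one has $f^\sharp(t)=t/u$, $f(t^{-1})=1/u$, $f^\sharp(t^{-1})=u/t$, and $(t^{-1}-1)^2=t^{-2}(t-1)^2$; the same substitution then reduces $h_1(t)=t\,h_1(t^{-1})$ to $(u-1)(t-u)=(1-u)(u-t)$, again an identity.

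I do not anticipate a genuine obstacle here: part (1) is essentially a one-line invocation of Theorem 4, the only care being the harmless passage from $\mathbb{P}_+[0,\infty)$ to $\mathbb{P}_+(0,\infty)$, and parts (2)–(3) are bookkeeping with the $\sharp$-operation followed by a quadratic identity. The one place to remain vigilant is computing $f(a^{-1})$ and $f^\sharp(a^{-1})$ correctly from the functional equation before substituting; everything else is routine algebra.
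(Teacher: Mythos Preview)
Your proposal is correct and follows essentially the same route as the paper: part (1) is obtained by specializing the main theorem with $g=f^\sharp$, and parts (2)--(3) are verified by writing out $t\,h_a(t^{-1})$ and checking that the denominators agree under the respective functional equations. Your introduction of the abbreviations $u=f(t)$, $\alpha=f(a)$ and your remark on the passage from $\mathbb{P}_+[0,\infty)$ to $\mathbb{P}_+(0,\infty)$ add a bit more detail than the paper gives (note in fact that any $f\in\mathbb{P}_+(0,\infty)$, being increasing and bounded below by $0$, extends continuously to $t=0$, so the two classes coincide), but the substance is the same.
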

\begin{proof} \;
We can directly prove (1) from theorem 3.
Because
\begin{align*}
       t\cdot h_a(t^{-1})  
  &  =  \frac{t(t^{-1}-a)(t^{-1}-a^{-1})}{(f(t^{-1})-f(a))(f^\sharp(t^{-1}) - f^\sharp(a^{-1}))}  \\
  & =  \frac{(t-a)(t-a^{-1})}{t(f(t^{-1})-f(a))(f^\sharp(t^{-1})-f^\sharp(a^{-1}))},
\end{align*}
we can compute
\begin{align*}
    & t(f(t^{-1})-f(a))(f^\sharp(t^{-1})-f^\sharp(a^{-1})) - (f(t)-f(a))(f^\sharp(t)-f^\sharp(a^{-1}))  \\
  = & (f(t^{-1})-f(a))(1/f(t^{-1})-t/af(a^{-1})) - (f(t) - f(a))(t/f(t)- 1/af(a^{-1}))  \\
  = & 0
\end{align*}
if it holds $f(t)=t\cdot f(t^{-1})$ or $a=1$, $f(t^{-1})=f(t)^{-1}$.
So we have (2) and (3).
\end{proof}

\begin{example}
Using this corollary, we can repeatedly construct an operator monotone function $h(t)$ on $[0,\infty)$
satisfying the relation
\[  h(t)=t\cdot h(t^{-1})  \qquad t>0 \tag{*} .  \]
If we choose $t^p$ ($0<p<1$) as $f(t)$ in  Corollary 5(3),
\[   h(t)  = \frac{(t-1)^2}{(t^p-1)(t^{1-p}-1)}.  \]
If we choose $\dfrac{(t-1)^2}{(t^p-1)(t^{1-p}-1)}$ as $f(t)$ in Corollary 5(2),
\begin{align*}  h(t) = & \frac{t-a} {\dfrac{(t-1)^2}{(t^p-1)(t^{1-p}-1)}- 
        \dfrac{(a-1)^2}{(a^p-1)(a^{1-p}-1) } } \\
       &  \qquad \times   
        \frac{t-a^{-1}}{ \dfrac{t(t^p-1)(t^{1-p}-1)}{(t-1)^2)}-\dfrac{a(a^{-p}-1)(a^{p-1}-1)}{(a-1)^2}  }  
\end{align*}
for $a>0$.
If we choose $t^p+t^{1-p}$  ($0<p<1$) as $f(t)$ in Corollary 5(2),
\begin{align*}
   h(t)  & =  \frac{t-a}{t^p+t^{1-p}-a^p-a^{1-p}}  \times
           \frac{t-a^{-1}}{ \dfrac{1}{t^{p-1}+t^{-p}} - \dfrac{1}{a^p+a^{1-p}} }
           \qquad (a>0)  \\
    & = \frac{\sqrt{t}(\cosh(\log t)-\cosh(\log a) )}
           {\cosh(\log \sqrt{t})-\cosh(\log \sqrt{t} +\log(t^p+t^{1-p})-\log(a^p+a^{1-p}) ) }.
\end{align*}
These functions, $h\in \mathbb{P}_+[0,\infty)$, satisfy the relation (*).
\end{example}

\section{Extension of Theorem 4}

Let  $m$ and $n$ be  positive integers and $f_1, f_2,\ldots, f_m$, $g_1,g_2,\ldots,g_n $ be non-constant,
non-negative operator monotone functions on $[0,\infty)$.
We assume that the function
\[   F(t) = \frac{\prod_{i=1}^m f_i(t)}{t^{m-1}\prod_{j=1}^n g_j(t)}   \]
is operator monotone on $[0,\infty)$.
For non-negative numbers $a_1, a_2,\ldots, a_m, b_1,b_2,\ldots,b_n$, we define the function $h(t)$ as follows:
\[  h(t) = \prod_{i=1}^m \frac{t-a_i}{f_i(t)-f_i(a_i)} \prod_{j=1}^n \frac{g_j(t)(t-b_j)}{tg_j(t)-b_jg_j(b_j)}
     \qquad (t\ge 0)  . \]
Then it follows from Proposition 2 that $h(z)$ is holomorphic on $\mathbb{H}_+$ , 
$h([0,\infty))\subset [0,\infty)$ and $\arg h(z) > 0$ for any $z \in \mathbb{H}_+$.

\begin{theorem}  In tha above setting, we have the followings: 

(1)  When $f_i$ and $g_j$ ($1\le i \le m$, $i\le j \le n$) are continuous on $\overline{\mathbb{H}_+}$ and 
\[  f_i(t) - f_i(a_i) \neq 0, \quad  t g_j(t)-b_jg_j(b_j) \neq 0, \quad t\in (-\infty,0),  \]
$h(t)$ is operator monotone on $[0,\infty)$.
 
(2)  When there exists a positive number $\alpha$ such that $\alpha \arg z \le \arg F(z)$ for all $z\in \mathbb{H}_+$,
$h(t)$ is operator monotone on $[0,\infty)$.
\end{theorem}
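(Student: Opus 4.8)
The plan is to treat (1) as the base case, proved by the same boundary-value argument as Theorem 4, and to deduce (2) from (1) by the substitution $t\mapsto t^{p}$ followed by the limit $p\to 1-0$, as in the ``general case'' there.

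\emph{Part (1).} As noted just before the statement, $h$ is holomorphic on $\mathbb{H}_+$, continuous on $\mathbb{H}_+\cup[0,\infty)$, maps $[0,\infty)$ into $[0,\infty)$, and has $\arg h(z)>0$ on $\mathbb{H}_+$; by Proposition 2 each factor is operator monotone, so $h$ has no pole in $\mathbb{H}_+$, and under the present hypotheses $h$ extends continuously to $\overline{\mathbb{H}_+}$. It then suffices to show ${\rm Im}\,h\ge 0$ on the boundary of $H(r)=\{z\in\mathbb{C}\mid |z|\le r,\ {\rm Im}\,z\ge 0\}$ for all $r>(m+n-1)\max\{a_i,b_j\}$ and let $r\to\infty$, using the harmonicity of ${\rm Im}\,h$, the minimum principle, and the open mapping theorem, exactly as in Theorem 4. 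On $(-\infty,0)$ one uses $\arg(z-a_i)=\arg(z-b_j)=\pi$, the fact that subtracting a non-negative real does not decrease the argument of a point of $\overline{\mathbb{H}_+}$ (so $\arg(f_i(z)-f_i(a_i))\ge\arg f_i(z)$), and $\arg(zg_j(z))\in[\pi,2\pi]$ forcing $\arg(zg_j(z)-b_jg_j(b_j))\ge\pi$, to get
\[
  \arg h(z)\le m\pi+\sum_j\arg g_j(z)-\sum_i\arg f_i(z)\le m\pi-(m-1)\pi=\pi,
\]
the last step because $F\in\mathbb{P}_+[0,\infty)$ forces $\arg F(z)\ge 0$, hence $\sum_i\arg f_i-\sum_j\arg g_j\ge(m-1)\pi$ on $(-\infty,0)$ (by continuity, off the discrete set where some $f_i$ or $g_j$ vanishes, where ${\rm Im}\,h=0$). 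On the arc $|z|=r$ one applies Lemma 3 to the $m+n$ linear factors, $\arg(z-a_i),\arg(z-b_j)<\frac{\pi+(m+n-1)\arg z}{m+n}$, splits $\{1,\dots,n\}$ into $\{j:\arg(zg_j(z))\le\pi\}$ (where $\arg(zg_j(z)-b_jg_j(b_j))\ge\arg(zg_j(z))$) and its complement (where $\ge\pi$), and after inserting $\arg F(z)\ge 0$ the estimate telescopes to
\[
  \arg h(z)\le\pi+(n-k)(\arg z-\pi)\le\pi .
\]
Together with $\arg h(z)>0$ this gives ${\rm Im}\,h\ge0$ on $\partial H(r)$, and (1) follows.

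\emph{Part (2).} First, $\alpha\arg z\le\arg F(z)\le\arg z$ forces $0<\alpha\le 1$ and shows $\phi(t):=F(t)/t^{\alpha}\in\mathbb{P}_+[0,\infty)$: since $0\le\arg\phi(z)=\arg F(z)-\alpha\arg z\le(1-\alpha)\arg z<\pi$, $\phi$ maps $\mathbb{H}_+$ into $\overline{\mathbb{H}_+}$, and being operator monotone on $(0,\infty)$, nonnegative, hence bounded near $0$, it is operator monotone on $[0,\infty)$. For $p$ with $\frac{m-1}{\alpha+m-1}<p<1$ put $f_{i,p}(t)=f_i(t^{p})$ and $g_{j,p}(t)=g_j(t^{p})$; these belong to $\mathbb{P}_+[0,\infty)$, are continuous on $\overline{\mathbb{H}_+}$ (as $z\mapsto z^{p}$ carries $(-\infty,0)$ into the open upper half plane, where $f_i,g_j$ are holomorphic), and satisfy the non-vanishing conditions of (1) on $(-\infty,0)$ (there ${\rm Im}\,f_i(t^{p})>0$ and ${\rm Im}\,(tg_j(t^{p}))<0$). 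Moreover, with $\delta:=p\alpha-(1-p)(m-1)\ge 0$,
\[
  F_p(t):=\frac{\prod_i f_{i,p}(t)}{t^{m-1}\prod_j g_{j,p}(t)}=t^{-(1-p)(m-1)}F(t^{p})=t^{\delta}\phi(t^{p}),
\]
and $\arg\bigl(z^{\delta}\phi(z^{p})\bigr)\le\bigl(\delta+(1-\alpha)p\bigr)\arg z=\bigl(p-(1-p)(m-1)\bigr)\arg z\le p\arg z<\pi$, so $F_p$ is operator monotone on $[0,\infty)$. Hence part (1) applies to $f_{i,p},g_{j,p}$ and shows
\[
  h_p(t)=\prod_i\frac{t-a_i}{f_i(t^{p})-f_i(a_i^{p})}\ \prod_j\frac{g_j(t^{p})(t-b_j)}{tg_j(t^{p})-b_jg_j(b_j^{p})}
\]
is operator monotone on $[0,\infty)$; as $p\to 1-0$ we get $h_p(t)\to h(t)$ pointwise on $[0,\infty)$, and a pointwise limit of operator monotone functions is operator monotone, so $h$ is operator monotone.

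\emph{Main obstacle.} Given Lemma 3, the argument bookkeeping in (1) is routine; the real work is (a) making the two boundary estimates of $\arg h$ close, in particular the use of Lemma 3 with $m+n$ factors and of $\arg F\ge0$ to absorb $-\sum_i\arg f_i$, and (b) in (2), the passage $F\mapsto F_p$: since $t^{-(1-p)(m-1)}$ is operator \emph{decreasing}, $F_p$ need not be operator monotone for a general operator monotone $F$, and it is precisely $\alpha\arg z\le\arg F(z)$, in the guise of $F=t^{\alpha}\phi$ with $\phi\in\mathbb{P}_+$, that rescues it, and only for $p$ near $1$.
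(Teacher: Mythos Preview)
Your proposal is correct and follows essentially the same route as the paper: the boundary estimates in (1) via Lemma 3 and $\arg F\ge 0$, the splitting on $\arg(zg_j(z))\lessgtr\pi$, and the minimum principle on $H(r)$ are exactly the paper's argument. In (2) the paper likewise sets $f_{i,p}(t)=f_i(t^p)$, $g_{j,p}(t)=g_j(t^p)$, picks $p\in\bigl(\tfrac{m-1}{\alpha+m-1},1\bigr)$, checks $F_p\in\mathbb{P}_+[0,\infty)$, applies (1), and lets $p\to 1$; your auxiliary function $\phi=F/t^{\alpha}$ is a cosmetic repackaging of the paper's direct estimate $\arg F_p(z)\ge(\alpha p-(m-1)(1-p))\arg z>0$, and your $\delta$ is in fact strictly positive (not merely $\ge 0$) by the choice of $p$.
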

\begin{proof}   \; 
(1) Using the same argument of proof of Theorem 4 (1), it suffices to show that
$0\le \arg h(z) \le \pi$ for $z\in \mathbb{R}$ or  $z\in \mathbb{H}_+$ whose absolutely value is sufficiently large.

In the case $z\in (-\infty,0)$, i.e.,  $|z|>0$ and $\arg z =\pi$,
we have
\begin{align*}
     & \arg h(z) \\
   = & \sum_{i=1}^m \arg (z-a_i) + \sum_{j=1}^n \arg (g_j(z)(z-b_j) ) \\
     & \qquad - \sum_{i=1}^n \arg (f_i(z) - f_i(a_i)) 
              -\sum_{j=1}^n (zg_j(z)-b_jg_j(b_j) )  \\
 \le & m\pi + n\pi + \sum_{j=1}^n \arg g_j(z) 
         - \sum_{i=1}^m \arg f_i(z) - n\pi  \\
   = & \pi -\arg \frac{\prod_{i=1}^n f_i(z)}{z^{m-1}\prod_{j=1}^n g_j(z)}
 \le \pi .
\end{align*}
So it holds $0\le \arg h(z) \le \pi$.

In the case that $z\in \mathbb{H}_+$ satisifies 
\[  |z|> (m+n-1) \max \{ a_i, b_j \mid 1 \le i \le m, 1\le j \le n \}. \]
Then it holds that
\[  \arg (z-a_i), \arg (z-b_j)  < \frac{\pi + (m+n-1)\arg z}{m+n}  \]
by Lemma 3.
We may assume that there exists $k$ ($1\le k \le n$) such that
\[  \arg (z g_j(z)) \le \pi \quad (j\le k), \qquad
    \arg (z g_j(z)) > \pi \quad (j>k) .  \]
Since
\begin{align*}
     & \arg  h(z) \\
 \le & \frac{\pi +(m+n-1)\arg z}{m+n}\times m +\frac{\pi +(m+n-1)\arg z}{m+n}\times n 
              + \sum_{j=1}^n \arg g_j(z) \\
     & \qquad -\sum_{i=1}^m \arg f_i(z) - \sum_{j=1}^k \arg zg_j(z) -(n-k)\pi \\
  =  & \pi + (m+n-k-1)\arg z + \sum_{j=k+1}^n \arg g_j(z) 
        - \sum_{i=1}^m \arg f_i(z) -(n-k)\pi  \\
 \le & \pi + (n-k)(\arg z - \pi) 
        - \arg \frac{\prod_{i=1}^m f_i(z)}{z^{m-1}\prod_{j=1}^m g_j(z)}  \\
 \le & \pi -\arg F(z) \le \pi ,  
\end{align*}
we have $0 \le \arg h(z) \le \pi$.
So $h(t)$ is operator monotone on $[0,\infty)$.

(2) We choose a positive number $p$ as follows:
\[  \frac{m-1}{\alpha +m-1} < p < 1  .  \]
We define functions $f_{i,p}, g_{j,p}$ as follows:
\[  f_{i,p}(z) = f_i(z^p), \quad g_{j,p}(z) = g_j(z^p) 
    \qquad (z\in \mathbb{H}_+). \]
Since $f_i, g_j \in \mathbb{P}_+[0,\infty)$, $f_{i,p}, g_{j,p}$ are continuous on $\overline{\mathbb{H}_+}$ and
satisfy the condition
\[  f_{i,p}(t) -f_{i,p}(a_i) \neq 0, \quad tg_{j,p}(t)-b_jg_{j,p}(b_j)\neq 0, \quad t\in(-\infty,0). \] 
We put
\[  F_p(t) = \frac{\prod_{i=1}^m f_{i,p}(t)}{t^{m-1}\prod_{j=1}^n g_{j,p}(t)}
           = F(t^p) t^{-(m-1)(1-p)} . \]
Then $F_p$ is holomorphic on $\mathbb{H}_+$ and satisfies $F_p( (0,\infty) )\subset (0,\infty)$.
For any $z\in \mathbb{H}_+$, we have
\begin{align*}
  \arg F_p(z) & = \arg F(z^p) - (m-1)(1-p)\arg z \le \arg F(z^p) \le \pi  \\ 
\intertext{and}  
  \arg F_p(z) & \ge \alpha \arg z^p -(m-1)(1-p)\arg z  \\ 
                    & =(\alpha p - (m-1)(1-p))\arg z \\
                    & =( (\alpha +m -1)p-(m-1) ) \arg z >0 . 
\end{align*}
So we can see $F_p\in \mathbb{P}_+[0,\infty)$.
By (1), we can show that
\[  h_p(t) = \prod_{i=1}^m \frac{(t-a_i)}{f_{i,p}(t)-f_{i,p}(a_i)} 
                 \prod_{j=1}^n \frac{g_{j,p}(t)(t-b_j)}{tg_{j,p}(t) - b_jg_{j,p}(b_j)}  \]
is operator monotone on $[0,\infty)$.
When $p$ tends to $1$, $h_p(t)$ also tends to $h(t)$.
Hence $h(t)$ is operator monotone on $[0,\infty)$.
\end{proof}

\begin{example}
Let $0<p_i\le1$ ($i=1,2,\ldots,m$) and $0\le q_j \le 1$ ($j=1,2,\ldots,n$).
We put
\[  f_i(t) = t^{p_i}, \quad g_j(t) = t^{q_j} \qquad (t\ge0).  \]
By the calculation
\[  F(t) = \frac{\prod_{i=1}^m f_i(t)}{t^{m-1}\prod_{j=1}^n g_j(t)} = t^{\sum_{i=1}^m p_i -\sum_{j=1}^n q_j -(m-1)}, \]
we have, for real numbers $a_i$, $b_j\ge 0$, 
\[  h(t) = t^{\sum_{j=1}^n q_j} \frac{(t-a_1)\cdots (t-a_m)(t-b_1)\cdots (t-b_n)}
                  {(t^{p_1}-a_1^{p_1})\cdots (t^{p_m}-a_m^{p_m})(t^{1+q_1}-b_1^{1+q_1})\cdots (t^{1+q_n}-b_n^{1+q_n}) }  \]
is operator monotone on $[0,\infty)$ if it holds
\[  0\le \sum_{i=1}^m p_i -\sum_{j=1}^n q_j -(m-1) \le 1,  \]
i.e., $F(t)$ is operator monotone on $[0,\infty)$.

When $\sum_{i=1}^m p_i = \sum_{j=1}^n q_j +(m-1)$, we can see that
\[  h(t) = \frac{t^{\sum_{j=1}^m q_j}  (t-1)^{m+n} }{\prod_{i=1}^m (t^{p_i}-1) \prod_{j=1}^n (t^{1+q_j} -1) }  \]
is operator monotone on $[0,\infty)$ and satisfies the functional equation
\[  h(t) = t \cdot h(t^{-1}) . \tag{*} \]

We can easily check that, if $h_1$, $h_2\in \mathbb{P}_+[0,\infty)$ satisfy the property (*), then the functions
\begin{align*}
  f(t) & =h_1(t)^{1/p} h_2(t)^{1-1/p}  \qquad (0<p<1) \\
  g(t) & = \frac{t}{h_1(t)}
\end{align*}
are operator monotone on $[0,\infty)$ and satisfy the property (*).

Combining these facts, for $r_i, s_i$ ($i=1,2,\ldots,n$) with
\begin{gather*}
 0<r_1, \ldots ,r_c \le 1, \quad 1\le r_{c+1}, \ldots , r_n \le 2  \\
 0<s_1, \ldots ,s_d \le 1 , \quad 1\le s_{d+1}, \ldots , s_n \le 2 \\
\sum_{i=1}^c r_i = \sum_{i=c+1}^n r_i -1, \quad \sum_{i=1}^d s_i = \sum_{i=d+1}^n s_j -1 ,
\end{gather*}
we can see that the function
\[  h(t) = \sqrt{ t^\gamma  
              \prod_{i=1}^n \frac{r_i (t^{s_i}-1)}{s_i (t^{r_i}-1)} }  \]
is operator monotone on $[0,\infty)$ and satisfies the property (*) and $h(1)=1$,
where $\gamma = 1-c+d+\sum_{i=1}^c r_i - \sum_{i=1}^d s_i$.
\end{example}

\vspace{5mm}
\noindent
{\bf Acknowledgements.} \; The authors thank to Professor M. Uchiyama for his 
useful comments. 
The works of M. N. was partially supported by Grant-in-Aid for Scientific Research
(C)22540220.

\noindent
Graduate School of Science\\
Chiba University\\
Inage-ku, Chiba 263-8522\\
Japan\\
e-mail: nagisa@math.s.chiba-u.ac.jp\\

\end{document}